\newtheorem{Thm}{Theorem}
\newtheorem{Lem}{Lemma}
\newtheorem{Prop}{Proposition}
\theoremstyle{definition}
\newtheorem{Rem}{Remark}
\newcommand{\comment}[1]{}
\def\indd#1{{\bf 1}_{\{#1\}}}
\def\indn#1{\{#1_n\}_{n\in\N}}
\newcommand{\proba}{\mathbb P}
\newcommand{\esp}{{\mathbb E}}
\newcommand{\defe}{\mathrel{\mathop:}=}
\newcommand{\inv}{^{-1}}
\newcommand{\cov}{{\rm{Cov}}}
\def\B{{\mathbb B}}
\newcommand{\eqnh}{\begin{eqnarray*}}
\newcommand{\eqne}{\end{eqnarray*}}
\newcommand{\eqnhn}{\begin{eqnarray}}
\newcommand{\eqnen}{\end{eqnarray}}
\newcommand{\equh}{\begin{equation}}
\newcommand{\eque}{\end{equation}}
\def\summ#1#2#3{\sum_{#1 = #2}^{#3}}
\def\sif#1#2{\sum_{#1=#2}^\infty}
\def\bcupp#1#2#3{\bigcup_{#1=#2}^{#3}}
\newcommand{\eqd}{\stackrel{\rm d}{=}}
\def\topp#1{^{(#1)}}
\def\ccbb#1{\left\{#1\right\}}
\def\pp#1{\left(#1\right)} %(\pp was defined differently...)
\def\bb#1{\left[#1\right]}
\def\mmid{\;\middle\vert\;}
\def\floor#1{\left\lfloor #1 \right\rfloor}
\def\vv#1{{\bf #1}}
\def\d{{\rm d}}
\def\B{{\mathbb B}}
\def\mand{\mbox{ and }}
\def\qmand{\quad\mbox{ and }\quad}
\def\qmwith{\quad\mbox{ with }\quad}
\def\mfa{\mbox{ for all }}
\def\mmas{\mbox{ as }}
\def\wt#1{\widetilde{#1}}
\def\weakto{\Rightarrow}
\def\limn{\lim_{n\to\infty}}
\def\limsupn{\limsup_{n\to\infty}}
\def\R{{\mathbb R}}
\def\N{{\mathbb N}}
\begin{document}\sloppy
\title[Maximum process of f{B}m with shot noise]{Convergence to the maximum process of a fractional Brownian motion with shot noise}

\author{Yizao Wang}
\address
{
Yizao Wang,
Department of Mathematical Sciences,
University of Cincinnati,
2815 Commons Way, ML--0025,
Cincinnati, OH, 45221-0025.
}
\email{yizao.wang@uc.edu}

\begin{abstract}
We consider the maximum process of a random walk with additive independent noise in form of $\max_{i=1,\dots,n}(S_i+Y_i)$. The random walk may have dependent increments, but its sample path is assumed to converge weakly to a fractional Brownian motion. When the largest noise has the same order as the maximal displacement of the random walk, we establish an invariance principle for the maximum process in the Skorohod topology. The limiting process is the maximum process of the fractional Brownian notion with shot noise generated by Poisson point processes.   \end{abstract}

\keywords{Fractional Brownian motion, perturbed random walk, invariance principle, point process, continuous mapping theorem, Skorohod metric}
\subjclass[2010]{Primary, 60F17, 60G70; secondary, 	60G22, 60G55}

\maketitle

\section{Introduction}
Let $\indn X$ and $\indn Y$ be two independent sequences of random variables. Write $S_0 = 0, S_n = X_1+\cdots+X_n$ and we are interested in the asymptotic behavior of the maximum process $M_0 = 0$,
\[
M_n = \max_{i=1,\dots,n}(S_i+Y_i), n\in\N.
\]
We view $\indn S$ as a random walk and $\indn Y$ perturbations (or noise).
We allow dependence between steps of random walk, while  the perturbations $\indn Y$ are assumed to be independent and identically distributed (i.i.d.).

We consider the general framework that the sample path of the random walk $\{S_{\floor{nt}}\}_{t\in[0,1]}$ without perturbation converges weakly to a stochastic process, after appropriate normalization. Such results are referred to as invariance principles (and/or functional central limit theorems) in the literature. This includes Donsker's theorem \citep{donsker51invariance} which states that when $\indn X$ are i.i.d.~with zero mean and unit variance, the sample path converges weakly to a standard Brownian motion. 

More generally when $\indn X$ is stationary, weak convergence to fractional Brownian motions has been extensively investigated. 
Throughout, we assume the following invariance principle to hold for $S_n$: 
\equh\label{eq:SnWIP}
\ccbb{\frac{S_{\floor{tn}}}{n^H}}_{t\in[0,1]} \weakto\ccbb{\B^H_t}_{t\in[0,1]}
\eque
in the space of c\`adl\`ag functions $D[0,1]$, 
where $\B^H = \{\B^H_t\}_{t\in[0,1]}$ is the fractional Brownian motion with Hurst index $H\in(0,1)$. This is the zero mean Gaussian process with covariance function $\cov(\B_s,\B_t) = 2^{-1}(s^{2H}+t^{2H} + |s-t|^{2H})$, $s,t\geq0$.
%Recall that  the fractional Brownian motion with Hurst index $H$ is a zero-mean Gaussian process with $\cov(\B_s^H,\B_t^H) = \frac12{s^{2H} + t^{2H} - |s-t|^{2H}}$. 
An extensively studied model of $\indn X$ leading to the invariance principle~\eqref{eq:SnWIP} is stationary linear process. The seminal work of \citet{taqqu75weak} addressed the case when innovations of the linear processes are independent; for dependent innovations, we refer to the recent result of \citet{dedecker11invariance} and references therein, among many others under various dependence assumptions.
%We choose the normalization $n^H$ for the sake of simplicity, and in general, the invariance principle may hold with normalization $n^H\ell(n)$ for some slowly varying function $\ell(n)$. The same results can be obtained. 

We are interested in the behavior of the maximum process of $\indn S$ perturbed with $\indn Y$, given that the invariance principle~\eqref{eq:SnWIP} holds for some $H\in(0,1)$. It is clear  that the only non-trivial case is when the tail distribution of $Y_i$ satisfies, for some $\kappa>0$,
\equh\label{eq:RV}
\proba(Y_i>x)\sim \kappa x^{-{1/H}} \mmas x\to\infty.
\eque
Indeed, in this case, 
\[
\limn\proba\pp{\max_{i=1,\dots,n}\frac{Y_i}{n^H}\leq x} = \exp(-\kappa x^{-1/H}), x>0,
\]
and the normalization $n^H$ gives non-degenerate distributional limit for both $S_n$ and $Y_n$. Otherwise, 
if $\proba(Y_i>x)\sim \kappa x^{-\beta}$ for some $\beta\neq1/H$, then either $\max_{i=1,\dots,n}S_i$ or $\max_{i=1,\dots,n}Y_i$ will dominate after appropriate normalization, and the result is immediate. 

The main result of this paper is an invariance principle in form of
\equh\label{eq:WIP}
\ccbb{\frac{M_{\floor{nt}}}{n^H}}_{t\in[0,1]}\weakto \{Z^H_t\}_{t\in[0,1]}, \mbox{ in $D[0,1]$}.
\eque
The limiting process $\{Z^H_t\}_{t\in\R_+}$ is defined as follows. Let $\B^H = \{\B^H_t\}_{t\in\R_+}$ be a fractional Brownian motion, and let $\{\eta_i,U_i\}_{i\in\N}$ be a Poisson point process on $\R_+\times\R_+$ with intensity $H\inv x^{-1-1/H}\d x \d u$, in the same probability space as and independent of $\{\B^H_t\}_{t\in\R_+}$. Throughout, we assume $\B^H$ has continuous sample path \citep{samorodnitsky94stable}. Then, the limiting process is defined as
\equh\label{eq:Z}
Z^H_t(\kappa) \defe \sup_{s\in[0,t]}\pp{\B^H_s + \kappa^H\sif i1\eta_i\indd{U_i = s}}, t\in\R_+.
\eque
In the sequel, we fix $\kappa\in(0,\infty), H\in(0,1)$ and write $Z^H\equiv\{Z^H_t\}_{t\in\R_+}\equiv \{Z^H_t(\kappa)\}_{t\in\R_+}$ for the sake of simplicity. 
The main result is the following.
\begin{Thm}\label{thm:1}
Let $\indn X$ and $\indn Y$ be two independent sequences of random variables. Suppose that $\indn X$ satisfy~\eqref{eq:SnWIP} and $\indn Y$ are i.i.d.~random variables satisfying~\eqref{eq:RV}. Then, the invariance principle~\eqref{eq:WIP} holds in the space of $D[0,1]$ equipped with the Skorohod $J$-1 topology. 
\end{Thm}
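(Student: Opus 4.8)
The plan is to represent both $M_{\floor{n\cdot}}/n^H$ and the limit $Z^H$ as images of the pair (rescaled random-walk path, point process of perturbations) under a single functional, so that \eqref{eq:WIP} follows from joint convergence of these pairs, continuity of the functional, and a truncation argument. \textbf{Step 1 (joint convergence).} I would encode the perturbations by the point processes $N_n \defe \sum_{i=1}^n \delta_{(i/n,\,Y_i/n^H)}$ on $[0,1]\times(0,\infty]$. Since $n\,\proba(Y_1/n^H>x) = n\,\proba(Y_1>n^Hx)\to\kappa x^{-1/H}$ by \eqref{eq:RV}, the classical point-process theory for i.i.d.\ regularly varying sequences gives $N_n\weakto N$ in the space $M_p([0,1]\times(0,\infty])$ of point measures with the vague topology, where $N$ is the Poisson point process with intensity $\kappa H\inv x^{-1-1/H}\d x\,\d u$; a change of variables $x\mapsto\kappa^H x$ shows that $N$ has the same law as the process $\sum_i \delta_{(U_i,\,\kappa^H\eta_i)}$ appearing in \eqref{eq:Z}. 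Because $\indn X$ and $\indn Y$ are independent, the marginal convergences \eqref{eq:SnWIP} and $N_n\weakto N$ combine into $\pp{\ccbb{S_{\floor{nt}}/n^H}_{t\in[0,1]},\,N_n}\weakto(\B^H,N)$ in $D[0,1]\times M_p([0,1]\times(0,\infty])$, the two coordinates being independent.

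\textbf{Step 2 (truncated functional and continuous mapping).} Fix $\epsilon>0$ and, for $f\in C[0,1]$ and a point measure $\mu=\sum_j\delta_{(u_j,x_j)}$, set
\[
\Phi^\epsilon_t(f,\mu)\defe\max\pp{\sup_{s\in[0,t]}f(s),\ \max_{j:\,u_j\le t,\,x_j>\epsilon}\pp{f(u_j)+x_j}},
\]
with the convention that the inner maximum over an empty index set is $-\infty$. Then $\Phi^\epsilon(S_{\floor{n\cdot}}/n^H,N_n)$ is exactly the running maximum of $S_i/n^H$ combined with the perturbed values $(S_i+Y_i)/n^H$ restricted to the large-noise sites $Y_i>\epsilon n^H$. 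I would show that $\Phi^\epsilon$ is continuous into $(D[0,1],J_1)$ at every pair $(f,\mu)$ with $f$ continuous, $\mu$ having distinct time-coordinates, and $\mu$ having no atom on the level $\{x=\epsilon\}$: uniform convergence of the paths together with convergence of the finitely many atoms above level $\epsilon$ forces the finitely many jumps to be matched at nearby times and with nearby sizes, which is precisely $J_1$-convergence. Since $(\B^H,N)$ meets these requirements almost surely ($\B^H$ is continuous, and a Poisson process a.s.\ has distinct time-coordinates and no atom on a fixed level), the continuous mapping theorem gives $\Phi^\epsilon(S_{\floor{n\cdot}}/n^H,N_n)\weakto\Phi^\epsilon(\B^H,N)$.

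\textbf{Steps 3--4 (remove the truncation and conclude).} As $\epsilon\downarrow0$ the process $\Phi^\epsilon(\B^H,N)$ increases to $Z^H$, and a point with $x_j\le\epsilon$ can raise the supremum by at most $\epsilon$ over $\sup_{s\le t}\B^H_s$; hence $0\le Z^H_t-\Phi^\epsilon_t(\B^H,N)\le\epsilon$ uniformly in $t$, so $\Phi^\epsilon(\B^H,N)\to Z^H$ uniformly. For finite $n$, every site with $Y_i/n^H\le\epsilon$ contributes at most $\max_{k\le\floor{nt}}S_k/n^H+\epsilon\le\Phi^\epsilon_t+\epsilon$, giving $M_{\floor{nt}}/n^H\le\Phi^\epsilon_t(S_{\floor{n\cdot}}/n^H,N_n)+\epsilon$; for the reverse bound one argues that the running maximum of $S_i/n^H$ is essentially attained at some site with $Y_i\ge0$, since the non-negative-noise sites have positive density and $\B^H$ is continuous, whence $M_{\floor{nt}}/n^H\ge\max_{k\le\floor{nt}}S_k/n^H-o_{\proba}(1)$ uniformly in $t$. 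These two bounds yield $\lim_{\epsilon\downarrow0}\limsupn\proba\pp{\sup_{t\in[0,1]}\sabs{M_{\floor{nt}}/n^H-\Phi^\epsilon_t(S_{\floor{n\cdot}}/n^H,N_n)}>\eta}=0$ for every $\eta>0$, and feeding Steps 2--4 into the standard approximation theorem for weak convergence completes the proof of \eqref{eq:WIP}.

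I expect the main obstacle to be Step 2: establishing the $J_1$-continuity of $\Phi^\epsilon$ and verifying the almost-sure continuity conditions, since one must control the interaction between the continuous running-supremum part and the finitely many genuine jumps produced by the large-noise sites. A secondary difficulty is the uniform-in-$t$ lower bound in Step 4, where individual $Y_i$ may be strongly negative and must be shown not to prevent $M_{\floor{n\cdot}}/n^H$ from reaching the running supremum of $\B^H$.
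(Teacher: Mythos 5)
Your proposal is correct and follows essentially the same route as the paper: joint convergence of the rescaled walk and the noise point process, a truncated supremum functional whose $J_1$-continuity at pairs (continuous path, point measure with distinct time coordinates) is the key lemma, an approximation theorem to remove the truncation, and a longest-gap/modulus-of-continuity argument showing that negative perturbations are harmless. The only differences are cosmetic: you truncate the noise at a fixed level $\epsilon$ where the paper keeps the $k$ largest order statistics, and you fold the treatment of negative $Y_i$ into the approximation step, whereas the paper isolates it as a separate sandwich argument ($Z^{-\infty}_{n,t}\le Z_{n,t}\le Z^{0}_{n,t}$) in its final section, making rigorous your "positive density" heuristic via the longest-run bound and the estimate $\omega_\delta \le 2\omega'_\delta + J$ applied to the pre-limit paths.
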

We refer to $Z^H$ in~\eqref{eq:Z} as the maximum process of {\it fractional Brownian motion with shot noise}. 
To the best of our knowledge, the limiting process $Z^H$ is new. 
%We also remark that we do not choose the name {\it perturbed fractional Brownian motions}, as  the {\it perturbed Brownian motion} has been used for a different model \citep{perman97perturbed}, arising as the limiting process of another type of perturbed random walks \citep{davis96weak}.

Our motivation originally came from 
\citet{hitczenko11renorming} in the study of perpetuities, who raised an open question in our framework with $S_n$ converging weakly to a standard Brownian motion (the open question was actually on the marginal distribution of $M_n$).  Theorem~\ref{thm:1} indicates that the conjectured limiting object in \citet[Remarks, p.~889]{hitczenko11renorming} is incorrect.

The model $\{S_n+Y_n\}_{n\in\N}$ has been seen in the literature as the {\it perturbed random walk}, see for example \citet{araman06tail} and \citet{alsmeyer14power}, and references therein for other motivations and applications. Here we take a different aspect, while most of the results in the literature assume negative drift of the random walk $\indn S$. Moreover, the same name {\it perturbed random walk} has been used for another model of self-interacting random walk, see \citet{davis96weak} and \citet{perman97perturbed}. The corresponding limiting process, the so-called {\it perturbed Brownian motion}, has also been characterized and investigated. We choose not to use the name {\it perturbed fractional Brownian motion} for our process $Z^H$.
%On the other hand, our aspect on the perturbed random walks is also different from others in the literature, where often the random walk is assumed to have a negative drift. See for example \citet{araman06tail} and \citet{alsmeyer14power} and references therein for other results and applications.

Theorem~\ref{thm:1} is first proved in the case when $\indn Y$ are non-negative. For this part the proof is essentially an application of continuous mapping theorem combined with a truncation argument. To do so, recall the invariance principle for $S_n$~\eqref{eq:SnWIP} and also the weak convergence for order statistics of $\indn Y$. Indeed, under~\eqref{eq:RV},
the order statistics $Y_{1,n}\geq Y_{2,n}\geq\cdots\geq Y_{n,n}$ of $Y_1,\dots,Y_n$ satisfy
\equh\label{eq:lepage}
\ccbb{\pp{\frac{Y_{i,n}}{n^H},\frac in}}_{i=1,\dots,n} \weakto \ccbb{(\kappa^H\eta_n,U_n)}_{n\in\N}
\eque
in the space of Radon point measures on $(0,\infty)\times(0,1)$,
where $(\eta,\vv U) = \{(\eta_n,U_n)\}_{n\in\N}$ as before is a Poisson point process on $\R_+\times(0,1)$ with intensity measure $H\inv x^{-1-1/H}\d x\d u$. See the seminal work of \citet{lepage81convergence} and also  \citet[Corollary 4.19]{resnick87extreme}. 
If one could represent $Z^H$ as the image of a continuous function evaluated at $\B^H$ and $(\eta,\vv U)$, then the result would be immediate. In the proof, however, a continuous mapping is constructed for truncated versions of $Z^H$, and we proceed by an approximation argument truncating small perturbations and noise. %We will review the Skorohod metric on $D[0,1]$ in~\eqref{eq:skorohod} below.
For the general case when $\indn Y$ may be negative, we show that the distribution of $Y_i\indd{Y_i<0}$ does not effect the maximum process in the limit by a coupling argument.

The paper is organized as follows. The finite-dimensional distributions of the limiting process $Z^H$ is given in Section~\ref{sec:Z}. The proof of Theorem~\ref{thm:1} with non-negative perturbation is given in Section~\ref{sec:thm1}. The case with negative perturbation is addressed in Section~\ref{sec:thm1negative}.\smallskip

{\bf Acknowledgement} The author thanks Jacek Weso\l owski for helpful discussions.

\section{Maximum process of Fractional Brownian motion with shot noise}\label{sec:Z}
We first give an explicit formula of the finite-dimensional distributions of $Z^H$. Throughout, $H\in(0,1)$ and $\kappa\in(0,\infty)$. By convention, $\exp(-\infty) = 0$. 
\begin{Prop}\label{prop:1}
Consider the process $Z^H$ defined in~\eqref{eq:Z}. For $0=t_0<t_1<\dots<t_d<\infty$ and $x_1,\dots,x_d\in\R$, 
\begin{multline}\label{eq:Zfdd}
\proba(Z^H_{t_1}\leq x_1,\dots,Z^H_{t_d}\leq x_d) \\
= \esp{\exp\ccbb{-\summ q1d\int_{t_{q-1}}^{t_q}\frac\kappa{(\min_{j=q}^dx_j-\B^H_t)_+^{1/H}}\d t}}.
\end{multline}
In particular, $Z^H$ is self-similar:
\[
\{Z_{at}^H\}_{t\in\R_+} \eqd a^H\{Z^H_t\}_{t\in\R_+}, \mfa a>0.
\]
\end{Prop}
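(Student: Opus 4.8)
The plan is to condition on the fractional Brownian path $\B^H$ and reduce the joint event to an avoidance (void) probability for the Poisson point process $\{(\eta_i,U_i)\}$, using its independence from $\B^H$. Fix $0=t_0<t_1<\dots<t_d$ and $x_1,\dots,x_d\in\R$. First I would note that, since at most one $U_i$ equals any given $s$, the process inside the supremum equals $\B^H_{U_i}+\kappa^H\eta_i$ at $s=U_i$ and $\B^H_s$ otherwise, so that
\[
Z^H_t = \max\pp{\sup_{s\in[0,t]}\B^H_s,\ \sup_{i:\,U_i\leq t}\pp{\B^H_{U_i}+\kappa^H\eta_i}}.
\]
Because the time-marginal intensity $\int_0^\infty H\inv x^{-1-1/H}\d x=\infty$, the points $\{U_i:U_i\leq t\}$ are a.s.\ dense in $[0,t]$; together with continuity of $\B^H$ this gives $\sup_i\B^H_{U_i}=\sup_{s\in[0,t]}\B^H_s$, so the first term is dominated by the second and $Z^H_t=\sup_{i:\,U_i\leq t}(\B^H_{U_i}+\kappa^H\eta_i)$. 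Finiteness holds since $\sup_{U_i\leq t}\eta_i<\infty$ a.s.\ and $\B^H$ is bounded on $[0,t]$.

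Next I would translate the joint event into a constraint on the points. Conditionally on $\B^H$, the event $\{Z^H_{t_k}\leq x_k\ \forall k\}$ holds iff every point with $U_i\leq t_k$ satisfies $\eta_i\leq (x_k-\B^H_{U_i})/\kappa^H$ for all such $k$. A point at time $U_i\in(t_{q-1},t_q]$ is constrained exactly by the indices $k\geq q$ (those with $t_k\geq U_i$), and since all these increments share the same value $\B^H_{U_i}$, the binding threshold is $\eta_i\leq(\min_{j=q}^d x_j-\B^H_{U_i})/\kappa^H$. Hence the joint event is precisely that no point falls in the forbidden region
\[
A=\bigcup_{q=1}^d\ccbb{(x,u):u\in(t_{q-1},t_q],\ x>\frac{\min_{j=q}^d x_j-\B^H_u}{\kappa^H}}.
\]

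Then I would compute $\mu(A)$ against the intensity $H\inv x^{-1-1/H}\d x\,\d u$, using $\int_\theta^\infty H\inv x^{-1-1/H}\d x=\theta^{-1/H}$ for $\theta>0$ and $(\kappa^H)^{1/H}=\kappa$, which yields on each strip
\[
\int_{t_{q-1}}^{t_q}\pp{\frac{\min_{j=q}^d x_j-\B^H_u}{\kappa^H}}^{-1/H}\d u=\int_{t_{q-1}}^{t_q}\frac{\kappa}{(\min_{j=q}^d x_j-\B^H_u)_+^{1/H}}\,\d u,
\]
where the positive part records that a subinterval on which $\B^H_u\geq\min_{j=q}^d x_j$ makes $\theta\leq0$ and forces infinite intensity; by the void probability $\exp(-\mu(A))$ and the convention $\exp(-\infty)=0$ this is consistent, and on that same path some $Z^H_{t_k}$ already exceeds $x_k$, so both sides vanish. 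Taking expectation over $\B^H$ (by independence and Fubini) gives~\eqref{eq:Zfdd}.

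Finally, for self-similarity I would apply~\eqref{eq:Zfdd} at times $at_1,\dots,at_d$, substitute $u=av$ in each integral, and invoke $\{\B^H_{av}\}_v\eqd\{a^H\B^H_v\}_v$; a matching computation for $\proba(a^HZ^H_{t_k}\leq x_k\ \forall k)=\proba(Z^H_{t_k}\leq a^{-H}x_k\ \forall k)$, using $(a^{-H}y)^{1/H}=a^{-1}y^{1/H}$, produces the identical expression, so all finite-dimensional distributions agree. I expect the main obstacle to be the rigorous identification of $\{Z^H_{t_k}\leq x_k\ \forall k\}$ with the avoidance event for $A$: one must handle the continuous-path supremum via density of the $U_i$ and treat the degenerate case $\theta\leq0$ so that the $(\cdot)_+$ and the $\exp(-\infty)=0$ convention are genuinely justified rather than merely formal.
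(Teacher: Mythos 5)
Your proposal is correct and takes essentially the same route as the paper's proof: condition on $\B^H$, rewrite the joint event as a Poisson void (avoidance) probability over the strips $(t_{q-1},t_q]$ with thresholds $\min_{j=q}^d x_j$, integrate the intensity to get the exponent, handle the infinite-intensity case via the $\exp(-\infty)=0$ convention, and obtain self-similarity by a change of variables together with the self-similarity of $\B^H$. The only cosmetic difference is your preliminary reduction $Z^H_t=\sup_{i:\,U_i\leq t}\pp{\B^H_{U_i}+\kappa^H\eta_i}$ via density of the points $U_i$, which the paper records only as a remark and does not need for the computation.
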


\begin{proof}%[Proof of Proposition~\ref{prop:1}]
The formula follows from the definition of Poisson point processes. Conditioning on the $\sigma$-algebra generated by $\B^H$, consider the region
\[
E = \bigcup_{q=1}^d\ccbb{(x,s)\in\R_+\times(0,t_q]: \B^H_s+x> x_q}.
\]
Clearly, 
\[
\proba(Z^H_{t_1}\leq x_1,\dots,Z^H_{t_d}\leq x_d) = \esp \bb{\proba\pp{\{\kappa^H\eta_i,U_i\}_{i\in\N}\cap E = \emptyset\mid\B^H}}.
\]
It remains to notice that conditioning on $\B^H$, the inner probability above is of the event of a Poisson random variable equal to zero, and its parameter equals the integration of the intensity over the region $E$ (depending on $\B^H$). Observe that
\begin{multline*}
E = \bcupp q1d\bcupp j1q\ccbb{(x,s)\in\R_+\times(t_{j-1},t_j]:\B_s^H + x >x_q}\\
 = \bcupp j1d\ccbb{(x,s)\in\R_+\times(t_{j-1},t_j]:\B_s^H + x >\min_{q=j,\dots,d}x_q} \mbox{ (disjoint).}
\end{multline*}

Thus, the parameter of the Poisson random variable equals
\begin{multline*}
\esp\pp{\int_E\kappa H\inv  x^{-1-1/H}\d x\d t\mmid \B^H} \\
= \summ j1d\int_{t_{j-1}}^{t_j}\int_{\kappa^{-H}(\min_{q=j,\dots,d}x_q - \B_s^H)_+}^\infty H\inv x^{-1-1/H}\d x\d s\\
= \summ j1d\int_{t_{j-1}}^{t_j}\frac\kappa{(\min_{q=j,\dots,d}x_q - \B_s^H)_+^{1/H}}\d s.
\end{multline*}
This formal calculation has a seemingly issue: some the integrations may equal infinity. This happens when
\equh\label{eq:W}
W \defe \max_{j=1,\dots,d}\pp{\sup_{s\in[t_{j-1},t_j]}\B_s^H-\min_{q=j,\dots,d}x_q} > 0.
\eque
However, this case does not cause any problem, as conditioning on $\B^H$ and the event above,
the event $\{\{\kappa^H\eta_i,U_i\}_{i\in\N}\cap E = \emptyset\}$ has probability zero, which is the same as $\exp(-\infty)$. Indeed, recall that $\indn\eta$ has a cluster point at $0$, and $W>0$ implies that $E\supset(0,\infty)\times[\tau_1,\tau_2]$ for some non-degenerate interval $[\tau_1,\tau_2]\subset(0,1)$. On the other hand,
when $W<0$, the integrations are all well defined with finite values. The only remaining case is when $W = 0$; 
this event, however, has probability zero and can thus be ignored.
We have thus proved
\begin{multline*}
\proba\pp{\{\kappa^H\eta_i,U_i\}_{i\in\N}\cap E = \emptyset\mid\B^H} \\
= \exp\ccbb{-\summ j1d\int_{t_{j-1}}^{t_j}\frac\kappa{(\min_{q=j,\dots,d}x_q - \B_s^H)_+^{1/H}}\d s}.
\end{multline*}
Taking expectation completes the proof of~\eqref{eq:Zfdd}.

To prove the self-similar property, observe that 
\begin{multline*}
\proba(Z^H_{at_1}\leq x_1,\dots,Z^H_{at_d}\leq x_d) \\
= \esp{\exp\ccbb{-\summ q1d\int_{at_{q-1}}^{at_q}\frac\kappa{(\min_{j=q}^dx_j-\B^H_t)_+^{1/H}}\d t}}\\
= \esp{\exp\ccbb{-\summ q1d\int_{at_{q-1}}^{at_q}\frac\kappa{(\min_{j=q}^dx_j-a^{H}\B^H_{t/a})_+^{1/H}}\d t}},
\end{multline*}
where the last step we used the self-similar property of the fractional Brownian motion. By change of variables, the expression equals
\begin{multline*}
%\proba(Z^H_{at_1}\leq x_1,\dots,Z^H_{at_d}\leq x_d) \\
\esp{\exp\ccbb{-\summ q1d\int_{t_{q-1}}^{t_q}\frac\kappa{(a^{-H}\min_{j=q}^dx_j-\B^H_{t})_+^{1/H}}\d t}}\\
= \proba(a^HZ^H_{t_1}\leq x_1,\dots,a^HZ^H_{t_d}\leq x_d).
\end{multline*}
This completes the proof.
\end{proof}

The next result focuses on the marginal distribution of $Z_1^H$, denoted by
\equh\label{eq:psi}
\Psi_H(x) = \proba(Z_1^H\leq x) = \esp\exp\ccbb{-\int_0^1\frac\kappa{(x-\B^H_t)_+^{1/H}}\d t}, x\in\R.
\eque
\begin{Prop}\label{prop:2}
For $H\in(0,1)$, $\Psi_H(x) = 0 \mbox{ for all } x\leq 0$, 
$\Psi_H(x)$ is continuous and strictly increasing on $(0,\infty)$, and $\lim_{x\to\infty}\Psi_H(x) = 1$.
\end{Prop}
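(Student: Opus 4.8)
My plan is to work throughout with the random functional $I(x)\defe\int_0^1\kappa(x-\B^H_t)_+^{-1/H}\,\d t$, so that by~\eqref{eq:psi} one has $\Psi_H(x)=\esp\,e^{-I(x)}$ with the convention $e^{-\infty}=0$, and with the auxiliary quantity $T\defe\sup_{t\in[0,1]}\B^H_t$, which is a.s.\ finite by path continuity. The three assertions then reduce to pathwise statements about $I(x)$. To prove the vanishing on $(-\infty,0]$ I would show $I(x)=+\infty$ a.s. Since $\B^H_0=0$, for $x<0$ path continuity produces a random interval $[0,\delta]$ on which $\B^H_t>x$, hence $(x-\B^H_t)_+=0$ and the integrand is $+\infty$ there, forcing $I(x)=\infty$ and $\Psi_H(x)=0$. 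For $x=0$ the integrand is $+\infty$ on $\{t:\B^H_t\geq0\}$, so it suffices that this set have positive Lebesgue measure a.s.; this holds because $T>0$ a.s.\ (e.g.\ by the law of the iterated logarithm at the origin, $\B^H_t>0$ along a sequence $t\downarrow0$), so $\{t:\B^H_t>0\}$ is a nonempty open set.

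The limit at infinity is the easy end: for $x>T(\omega)$ one has $I(x)\leq\kappa(x-T)^{-1/H}\to0$, so $e^{-I(x)}\to1$ a.s., and since $0\leq e^{-I(x)}\leq1$, dominated convergence gives $\lim_{x\to\infty}\Psi_H(x)=1$. Monotonicity is also pathwise: $x\mapsto I(x)$ is nonincreasing, so $x\mapsto e^{-I(x)}$ is nondecreasing and hence so is $\Psi_H$ (as it must be, being a distribution function). For strict monotonicity with $0<x_1<x_2$ I would restrict to the event $\{T<x_1\}$, which has positive probability by the positivity of the small-ball probabilities of fractional Brownian motion, $\proba(\sup_{t\in[0,1]}|\B^H_t|<\varepsilon)>0$ for every $\varepsilon>0$. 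On this event $I(x_1)$ and $I(x_2)$ are both finite and the integrand is strictly smaller at $x_2$, so $I(x_2)<I(x_1)$ and $e^{-I(x_2)}>e^{-I(x_1)}$; combined with the pathwise inequality $e^{-I(x_2)}\geq e^{-I(x_1)}$ off this event, taking expectations yields $\Psi_H(x_2)>\Psi_H(x_1)$.

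Continuity is the substantive part. For fixed $\omega$ the map $x\mapsto e^{-I(x)}$ equals $0$ for $x<T$ and is continuous for $x>T$ (by dominated convergence in the inner integral, the integrand being bounded near any such $x$), so its only possible discontinuity sits at $x=T$. Consequently $\Psi_H$ is continuous at a fixed $x_0>0$ as soon as the exceptional event $\{T=x_0\}$ is null, after which a final dominated-convergence argument (with dominating function $1$) closes the proof. The main obstacle is therefore exactly the atomlessness of $T=\sup_{t\in[0,1]}\B^H_t$, i.e.\ $\proba(T=x_0)=0$ for every $x_0>0$. I expect to obtain this from the general theory of suprema of Gaussian processes: the supremum of a nondegenerate centered Gaussian process with continuous sample paths has a law without atoms on $(0,\infty)$, while $\proba(T=0)=0$ follows once more from the small-time behavior of $\B^H$. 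Everything outside this single input reduces to elementary monotonicity and dominated convergence.
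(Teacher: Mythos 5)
Your proof is correct, but it takes a genuinely different route from the paper's. The paper's argument leans on self-similarity of the fractional Brownian motion: a change of variables turns the defining formula into $\Psi_H(x) = \esp\exp\ccbb{-\kappa\int_0^{x^{-1/H}}(1-\B^H_t)_+^{-1/H}\,\d t}$ for $x>0$, so that all dependence on $x$ sits in the upper limit of integration; strict monotonicity, continuity, $\Psi_H(x)<1$ and the limit $1$ at infinity are then read off from this single representation (the only delicate point there being pathwise right-continuity of the inner integral, which reduces to the trivially atomless marginal law of $\B^H_u$). For the behavior at and below $0$ the paper establishes the indicator identity~\eqref{eq:psi1}, using exactly the two facts you also use: positive occupation time above level $x$ on $\ccbb{\sup_{t\in[0,1]}\B^H_t > x}$, and $\proba(\sup_{t\in[0,1]}\B^H_t = x) = 0$. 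Your argument dispenses with self-similarity entirely and works pathwise with $I(x)$: the vanishing on $(-\infty,0]$ is the same occupation-time argument; strict monotonicity comes from the small-ball (support) property $\proba(T<x_1)>0$; the limit at infinity from the domination $I(x)\leq \kappa(x-T)^{-1/H}$; and continuity is reduced to atomlessness of $T=\sup_{t\in[0,1]}\B^H_t$ on $(0,\infty)$, which you import from Ylvisaker-type results on suprema of Gaussian processes, together with $\proba(T=0)=0$ from the local behavior at the origin. That external input is genuine but standard and correctly invoked; note that the paper itself asserts the very same fact (item (ii) in its proof) without citation, so your proof is not using anything the paper avoids. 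What the paper's route buys is economy: after the self-similar rewriting, continuity and monotonicity need nothing beyond elementary properties of one fixed random function. What your route buys is generality: nothing in your argument uses self-similarity, so it applies verbatim to any a.s.\ continuous process whose running supremum is atomless and has positive small-deviation probabilities.
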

\begin{proof}%[Proof of Proposition~\ref{prop:2}]
First, observe that by self-similarity, for $x>0$,
\begin{multline*}
\Psi_H(x) = \esp\exp\ccbb{-\int_0^1\frac1{x^{1/H}(1-\B^H_{t/x^{1/H}})^{1/H}_+}\d t} \\
= \esp\exp\ccbb{-\int_0^{x^{-1/H}}\frac1{(1-\B^H_{t})^{1/H}_+}\d t}.
\end{multline*}
From the above it is easy to see that for $x>0$, $\Psi_H(x)$ is strictly increasing, continuous, $\Psi_H(x)<1$, and $\lim_{x\to\infty}\Psi_H(x) = 1$. 

Next, observe that we can also write
\equh\label{eq:psi1}
\Psi_H(x) = \esp\ccbb{\exp\pp{-\int_0^1\frac1{(x-\B^H_t)_+^{1/H}}\d t}\indd{\sup_{0\leq t\leq 1}\B_t^H<x}}.
\eque
It follows from~\eqref{eq:psi1} that $\lim_{x\downarrow 0}\Psi_H(x) = 0$ and $\Psi_H(x) = 0$ for all $x<0$. To see~\eqref{eq:psi1}, it suffices to observe that (i) for $\proba$-almost all $\omega$ in the set $\{\sup_{0\leq t\leq1}\B_t^H> x\}$, the set $\{t\in[0,1]: \B_t(\omega)^H> x\}$ has strictly positive Lebesgue measure, by continuity of the sample path of fractional Brownian motions, so the exponential function equals zero, and (ii) $\proba(\sup_{0\leq t\leq 1}\B^H_t = x) = 0$. 
\end{proof}

We conclude this section by two remarks on $Z^H$. The first remark sheds light on why the negative values of $\indn Y$ do not have an effect in the limiting process.
\begin{Rem}\label{rem:negative}
One has an equivalent definition of $Z^H$  involving negative shot noise. It is natural to consider negative shot noise as with appropriate assumption on the lower tail of $\indn Y$, weak convergence of Radon point measures on $\R\setminus\{0\}\times(0,1)$ can be established. Namely, if 
\equh\label{eq:negative}
\proba(|Y_1|>x) \sim \kappa_0x^{-1/H} \mand \lim_{x\to\infty}\frac{\proba(Y_1>x)}{|Y_1|>x} = \theta = 1-\lim_{x\to\infty}\frac{\proba(Y_1<-x)}{|Y_1|>x}
\eque
for some $\kappa_0\in(0,\infty),\theta\in[0,1]$,~\eqref{eq:lepage} becomes
\[
\ccbb{\pp{\frac{Y_{i,n}}{n^H},\frac in}}_{i=1,\dots,n} \weakto \ccbb{(\kappa_0^H\epsilon_n\eta_n,U_n)}_{n\in\N}
\]
where $\indn\epsilon$ are i.i.d.~random variables independent from $\eta$ and $\vv U$, with law $\proba(\epsilon_1 = 1) = \theta = 1-\proba(\epsilon_1 = -1)$.

In this way, one would naturally define $Z^H$ in~\eqref{eq:Z} as 
\equh\label{eq:Z2}
Z^H_t = \sup_{s\in[0,t]}\pp{\B_s^H+\kappa^H_0 \sif i1\epsilon_i\eta_i\indd{U_i = s}}.
\eque
In view of the discussion after~\eqref{eq:W}, the negative shot noise (those $\eta_i$s with $\epsilon_i = -1$) has no effect in the distribution: when $W<0$ then all negative shot noise do not have an effect, and as long as $W>0$ the conditional probability becomes zero. 
Actually, to match~\eqref{eq:Z2} with~\eqref{eq:Z}, it suffices to  take $\kappa = \kappa_0\theta$ for $\theta\in(0,1]$, and the same calculation in the proof of Proposition~\ref{prop:1} goes through. 

Since the negative noise has no effect on the limiting process, in establishing  Theorem~\ref{thm:1}, instead of~\eqref{eq:negative} no assumption on the distribution of negative values of $Y_i$ is needed at all. 
\end{Rem}
The next remark provides an alternative view of $Z^H$ as a generalization of the extremal processes.
\begin{Rem}
Let $(\eta,\vv U)$ be as before. Then,
\[
V_t = \sup_{i\in\N}\eta_i\indd{U_i\leq t}, t\in\R_+
\]
defines a standard $1/H$-Fr\'echet extremal process, and the law of this process is completely determined by $(\eta,\vv U)$. The extremal process $\{V_t\}_{t\in[0,1]}$ is the limiting process of 
\[
\frac1{\kappa^Hn^H}\ccbb{\max_{i=0,\dots,\floor{nt}}Y_i}_{t\in[0,1]}.
\]
This result was first established by \citet{dwass64extremal} and \citet{lamperti64extreme}, although the point process representation of $V$ and the convergence was first introduced by \citet{pickands71twodimensional}, which has become a standard tool in studying extremes \citep{resnick87extreme}.

Similarly, observe that by the definition of $Z^H$ in~\eqref{eq:Z} and the path-continuity of fractional Brownian motions, 
\[
Z_t^H = \sup_{i\in\N}\pp{\B^H_{U_i}+\kappa^H\eta_i}\indd{U_i\leq t}, t\in\R_+.
\]
Therefore, $Z^H$  can be viewed as the maximum process of the process obtained by gluing shot noise $\indn\eta$ at random locations $\indn U$ to a fractional Brownian motion. This time, the shot noise $(\eta,\vv U)$ perturbed by the factional Brownian motion $\B^H$, namely $\{(\B_{U_n}^H+\eta_i,U_n)\}_{n\in\N}$, completely determine $Z^H$. 

This glued process cannot be studied through finite-dimensional distributions, as all the noise cannot be characterized for fixed $t$. Instead, a general framework is to view extremal processes and $Z^H$ here as random sup measures, as summarized in \citet{obrien90stationary}. We focus on the weak convergence in $D[0,1]$ and therefore do not pursue this direction here.
\end{Rem}
%%%%%%%%%%%%%%%%%%%%%%%%%
%%%%%%%%%%%%%%%%%%%%%%%%%
%%%%%%%%%%%%%%%%%%%%%%%%%
%%%%%%%%%%%%%%%%%%%%%%%%%
%%%%%%%%%%%%%%%%%%%%%%%%%
%%%%%%%%%%%%%%%%%%%%%%%%%
%%%%%%%%%%%%%%%%%%%%%%%%%
%%%%%%%%%%%%%%%%%%%%%%%%%
%%%%%%%%%%%%%%%%%%%%%%%%%
\section{Proof of Theorem~\ref{thm:1}, non-negative perturbation}\label{sec:thm1}
%\begin{proof}[Proof of Theorem~\ref{thm:1}, convergence of finite-dimensional distributions]
In this section, we prove Theorem~\ref{thm:1} under the additional assumption  that $\proba(Y_i\geq 0) = 1$.
We are interested in
\[
Z_{n,t} = \frac{M_{\floor {nt}}}{n^H}, t\in[0,1] \qmwith M_i = \max_{j=0,\dots,i}(S_j+Y_j), 
\]
and we write $S_0 = Y_0 = 0$ for convenience.
Recall the notation $\{Y_{i,n}\}_{i=1,\dots,n}$ for order statistics of $Y_1,\dots,Y_n$.
For each $k\in\N, n\in\N, n\geq k$,  introduce the truncated approximations of $Z_{n,t}$ and $Z_t^{H}$ by
\[
Z_{n,t}\topp k = \max_{i=0,\dots,\floor{nt}}\frac{S_i+Y_{i}\indd{Y_i\geq Y_{k,n}}}{n^{H}}
\]
and
\[
Z_t^{H,(k)} = \sup_{s\leq t}\pp{\B^H_s+\summ i1k \eta_i\indd{U_i = s}}.
\]
That is, for the random walk we ignore the effect of all but the $k$-largest perturbations, and for the limiting process $Z^H$ we ignore all but the $k$-largest shot noise $\{\eta_i\}_{i=1,\dots,k}$.

By \citet[Theorem 2]{dehling09new}, to prove the desired result it suffices to show
\equh\label{eq:approx1}
\lim_{k\to\infty}\limsupn \proba\pp{\sup_{t\in[0,1]}|Z_{n,t}\topp k - Z_{n,t}|>\epsilon} = 0 \mfa \epsilon>0,
\eque
and
\equh\label{eq:k}
\ccbb{Z_{n,t}\topp k}_{t\in[0,1]}\weakto \ccbb{Z_t^{H,(k)}}_{t\in[0,1]} \mbox{ in $D[0,1]$} \mfa k\in\N.
\eque
The first condition~\eqref{eq:approx1} is easy to verify. Observe that
\[
\sup_{t\in[0,1]}|Z_{n,t}-Z_{n,t}\topp k|\leq \frac{Y_{k+1,n}}{n^{H}}.
\]
Now by~\eqref{eq:lepage}, $Y_{k+1,n}/n^H\weakto \eta_{k+1}$ and thus~\eqref{eq:approx1} follows.

It remains to prove~\eqref{eq:k}, and we apply the continuous mapping theorem \citep{billingsley99convergence}. Consider the metric spaces $D[0,1]$ and $M_p((0,\infty)\times(0,1))$, the latter of which is the space of Radon point measures on $(0,\infty)\times(0,1)$. For more background on Radon point measures, we refer to \citet{resnick87extreme}. Denote the elements in these spaces by $x = \{x_t\}_{t\in[0,1]}$ and $(y,u) = \{y_n,u_n\}_{n\in\N}$ and assume $y_n\geq y_{n+1}, n\in\N$. Consider the mapping 
\[
\Psi\topp k: D[0,1]\times M_p((0,\infty)\times(0,1))\to D[0,1]
\]
defined by
\[
\Psi\topp k(x,y,u) = \{h\topp k_{x,y,u}(t)\}_{t\in[0,1]}
\]
with
\[
h\topp k_{x,y,u}(t) = \sup_{s\leq t}\pp{x_s + \summ i1k y_i\indd{u_i=s}}.
\]
Now, write $B_n^H \equiv \{S_{\floor{nt}/n^H}\}_{t\in[0,1]}$, $\vv Y_n = (Y_{1,n},\dots,Y_{n,n},0,\dots)$ and $ {\vv U}_n = (1/n,\dots,1,0,\dots)$. It follows that
\[
\{Z_{n,t}\topp k\}_{t\in[0,1]} = \Psi\topp k\pp{B^H_n,\frac{\vv Y_n}{n^H}, {\vv U}_n},
\]
and for $\eta = \indn\eta, \vv U = \indn U$,
\[
Z^{H,(k)} = \Psi\topp k\pp{\B^H,\kappa^H\eta,\vv U}.
\]
Recall that $B_n^H\weakto \B^H$ and $(\vv Y_n/n^H,\vv U_n)\weakto (\kappa^H\eta,\vv U)$ (\eqref{eq:SnWIP} and \eqref{eq:lepage} respectively).
Therefore, the desired result follows from the continuous mapping theorem, if one can show that $\Psi\topp k$ is continuous on a subset of $D[0,1]\times M_p((0,\infty)\times(0,1))$ with probability one (the probability induced by $(\B^H,\eta,\vv U)$). Indeed, it suffices to focus on the subset
\[
\Gamma = C[0,1]\times M_p((0,\infty)\times(0,1))\cap\{(x,y,u):\indn u \mbox{ pairwise different}\},
\]
as $\proba((\B^H,\eta,\vv U)\in \Gamma) = 1$. Here and in the sequel, $C[0,1]$ is the space of continuous functions indexed by $[0,1]$, equipped with the supremum metric. 
It remains to prove the following lemma to complete the proof of Theorem~\ref{thm:1}.
\begin{Lem}$\Psi\topp k$ is continuous from $\Gamma$ to $D[0,1]$.
\end{Lem}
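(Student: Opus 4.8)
The plan is to verify sequential continuity at an arbitrary point $(x,y,u)\in\Gamma$: if $(x^{(m)},y^{(m)},u^{(m)})\to(x,y,u)$ in $D[0,1]\times M_p((0,\infty)\times(0,1))$, then $\Psi\topp k(x^{(m)},y^{(m)},u^{(m)})\to\Psi\topp k(x,y,u)$ in the $J$-$1$ topology (both spaces being metrizable, this is the relevant notion for the continuous mapping theorem). Since the limit $x\in C[0,1]$, convergence in $D[0,1]$ to $x$ is equivalent to uniform convergence \citep{billingsley99convergence}, so $\|x^{(m)}-x\|_\infty\to0$. Because $u=\{u_i\}$ is pairwise distinct, and working on the probability-one subset of $\Gamma$ on which the atom heights are also distinct (so that $y_k>y_{k+1}$; this reduction is harmless for the continuous mapping theorem), the standard correspondence between vague convergence of point measures and convergence of the largest atoms \citep{resnick87extreme} gives, for each $i=1,\dots,k$, that $y^{(m)}_i\to y_i$ and $u^{(m)}_i\to u_i$, with the $u_i$ in the interior $(0,1)$.

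Next I would split off the continuous background from the jumps. Writing $R(x)(t)=\sup_{s\le t}x_s$ for the running-supremum functional and $\psi(t)=\max_{i\le k:\,u_i\le t}(x_{u_i}+y_i)$ for the piecewise-constant record process of the $k$ spikes, one checks from the definition of $\Psi\topp k$ — using that every height $y_i$ is positive, so any point attaining the supremum is already captured by the second term — that
\[
\Psi\topp k(x,y,u)=\max\!\big(R(x),\psi\big),
\]
and likewise $\Psi\topp k(x^{(m)},y^{(m)},u^{(m)})=\max(R(x^{(m)}),\psi_m)$ with $\psi_m(t)=\max_{i\le k:\,u^{(m)}_i\le t}(x^{(m)}_{u^{(m)}_i}+y^{(m)}_i)$. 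The background is harmless: $R$ is $1$-Lipschitz for $\|\cdot\|_\infty$, so $\|R(x^{(m)})-R(x)\|_\infty\le\|x^{(m)}-x\|_\infty\to0$, and $R(x)\in C[0,1]$. Thus the whole problem reduces to showing $\psi_m\to\psi$ in the $J$-$1$ topology and then combining the two pieces under a common time change.

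The crux is the construction of a single time change for $\psi_m$. Because $u_1,\dots,u_k$ are pairwise distinct, for $m$ large the $u^{(m)}_i$ share their strict ordering; let $\lambda_m$ be the increasing piecewise-linear homeomorphism of $[0,1]$ fixing $0,1$ and sending each $u_i$ to $u^{(m)}_i$. Then $\lambda_m-\mathrm{id}$ is affine between consecutive nodes, whence $\|\lambda_m-\mathrm{id}\|_\infty=\max_{i\le k}|u^{(m)}_i-u_i|\to0$. The key computation is that $\lambda_m$ preserves the index sets defining the records, namely $\psi_m(\lambda_m(t))=\max_{i\le k:\,u_i\le t}(x^{(m)}_{u^{(m)}_i}+y^{(m)}_i)$ over exactly the same set $\{i\le k:u_i\le t\}$ as for $\psi(t)$, so that
\[
\|\psi_m\circ\lambda_m-\psi\|_\infty\le\max_{i\le k}\big|x^{(m)}_{u^{(m)}_i}+y^{(m)}_i-x_{u_i}-y_i\big|\to0,
\]
using $\|x^{(m)}-x\|_\infty\to0$, the continuity of $x$, and $u^{(m)}_i\to u_i$, $y^{(m)}_i\to y_i$. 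Finally, the same $\lambda_m$ controls the background, since $\|R(x^{(m)})\circ\lambda_m-R(x)\|_\infty\le\|R(x^{(m)})-R(x)\|_\infty+\|R(x)\circ\lambda_m-R(x)\|_\infty\to0$ by the uniform continuity of $R(x)$ on $[0,1]$; as $\max$ is $1$-Lipschitz, $\|\Psi\topp k(x^{(m)},\dots)\circ\lambda_m-\Psi\topp k(x,\dots)\|_\infty\to0$, which gives $J$-$1$ convergence. I expect the main obstacle to be precisely this step: a single time change must align all $k$ jumps simultaneously, which is possible only because the $u_i$ are pairwise distinct — the defining restriction of $\Gamma$. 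Were two spike locations to coincide in the limit, the corresponding nearby but distinct prelimit jumps could not be matched by any time change and continuity would genuinely fail; the distinctness of the heights plays the analogous role in making the top-$k$ atoms, hence $\psi$ itself, depend continuously on the point measure.
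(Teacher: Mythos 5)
Your proof is correct, and it takes a genuinely different route from the paper's. The paper also reduces to sequential continuity and rests on the same two ingredients (uniform convergence of the running maxima $m\topp n\to m$ and convergence of the top-$k$ atoms of the point measures), but from there it argues by \emph{localization}: it splits $[0,1]$ into $\epsilon$-windows around each jump location and the intervals in between, treats three cases separately according to whether $j_1>m_{u_1}$, $j_1<m_{u_1}$ or $j_1=m_{u_1}$ (i.e., whether the spike does or does not create a jump of the maximum process), and for $k\geq 2$ only sketches how to iterate this when the lifted intervals $[u_i,v_i)$ overlap, with details omitted. Your argument replaces all of this by the global decomposition $\Psi\topp k(x,y,u)=R(x)\vee\psi$, with $\psi$ the pure-jump record process of the $k$ spikes, together with a single explicit piecewise-linear time change $\lambda_m$ matching all $k$ jump times simultaneously; the identity $\{i\leq k: u\topp m_i\leq\lambda_m(t)\}=\{i\leq k: u_i\leq t\}$ turns the comparison of the jump parts into a sup-norm estimate, and the continuous part $R(x)$ absorbs the time change by uniform continuity. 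This buys: no case analysis (since $\vee$ is $1$-Lipschitz, all three of the paper's cases are handled at once), a complete and uniform treatment of every $k$ (precisely where the paper is sketchiest), and---worth noting---your formulation allows the approximating first coordinates $x\topp m$ to be merely c\`adl\`ag, which is what the continuous mapping theorem actually requires here, because the pre-limit paths $B_n^H$ are step functions and hence not in $\Gamma$; the paper's proof, as written, only considers approximating sequences inside $\Gamma$. What the paper's approach buys in return is a more explicit picture of how and where jumps of the limit process are created and matched. Two points you flag should indeed stay explicit: (i) the restriction to limit points with $y_k>y_{k+1}$ (a probability-one set), needed to extract the ordered top-$k$ atoms continuously from the point measure, and used implicitly by the paper as well; and (ii) that for large $m$ the locations $u\topp m_1,\dots,u\topp m_k$ are pairwise distinct (forced by $u\topp m_i\to u_i$), without which your formula for $\psi_m$ would be wrong, since coincident spike locations stack additively in the definition of $\Psi\topp k$.
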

\begin{proof}
We first focus on $k=1$. Consider $\{(x\topp n,y\topp n, u\topp n)\}_{n\in\N}\subset \Gamma$ and $(x\topp n,y\topp n,u\topp n)\to (x,y,u)$ as $n\to\infty$. We show 
\equh\label{eq:convergence}
f\topp n \defe \Psi\topp1(x\topp n,y\topp n,u\topp n)\to f \defe \Psi\topp1(x,y,u) \mbox{ in }D[0,1] \mmas n\to\infty.
\eque
It is useful to view $f$ as the pointwise maximum function of the maximum process $m$ of $x$ and a step function as follows (and similarly for $f\topp n$). 
Let
\[
m\topp n_t = \sup_{s\in[0,t]}x_s\topp n \qmand m_t = \sup_{s\in[0,t]}x_s
\]
denote the maximum processes of $x\topp n$ and $x$, respectively. By assumption, $m\topp n$ and $m$ are both in $C[0,1]$. Then,
\equh\label{eq:bump}
f_t = m_t \vee \pp{j_1\indd{u_1\leq t}} \qmwith j_1 = x_{u_1} + y_1,\quad t\geq0.
\eque
In other words, $f$ can be viewed as $m$ lifted up to $j_1$ over an interval $[u_1,v_1]$, where $j_1$ is the height of the shot noise $y_1$ at $u_1$ lifted up by $x_{u_1}$ (the height of $x$ at $u_1$) and $v_1 = 1\wedge\inf\{s\geq 0:m_s\geq j_1\}$. Define similarly 
\equh\label{eq:fnt}
f_t\topp n = m_t\topp n\vee \pp{j_1\topp n\indd{u_1\topp n\leq t}} \qmwith j_1\topp n = x_{u_1\topp n}\topp n + y_1\topp n, \quad t\geq0.
\eque
Remark also that since $x\topp n\to x$ in $C[0,1]$, it follows that 
\equh\label{eq:m}
m\topp n\to m \mbox{ in } C[0,1].
\eque

Now to show~\eqref{eq:convergence}, we divide the domain of the functions in $D[0,1]$ into three parts, namely $[0,u_1-\epsilon],[u_1-\epsilon,u_1+\epsilon]$ and $[u_1+\epsilon,1]$, for some $\epsilon>0$ small enough. We investigate the convergence of functions in $D[0,1]$ on these sub-intervals respectively.
Recall that the Skorohod metric for $D[a,b]$ is given by, for $0\leq a<b\leq1$, $x,\wt x\in D[a,b]$,
\equh\label{eq:skorohod}
d_{a,b}(x,\wt x) = \inf_{\lambda\in\Lambda_{a,b}}\ccbb{\pp{\sup_{t\in[a,b]}|\lambda(t) - t|}\vee\pp{\sup_{t\in[a,b]}|x(t) - \wt x\circ\lambda(t)|}}, 
\eque
with 
\begin{align*}
\Lambda_{a,b} = \{\lambda: &\  [a,b]\to[a,b]:\lambda(a) = a,\lambda(b) = b, \\
&\ \mbox{continuous and strictly increasing.}\}
\end{align*}
More generally for $x,\wt x\in D[0,1]$ and $0\leq a<b\leq 1$, for the sake of simplicity, we write $d_{a,b}(x,\wt x) \defe d_{a,b}(r_{a,b}(x),r_{a,b}(\wt x))$, where
$r_{a,b}(x) = \{x_t\}_{t\in[a,b]}$
denote the restriction of $x$ to $D[a,b]$. 
Observe that to show $f\topp n\to f$ in $D[0,1]$, it suffices to show
\eqnhn
& & \limn d_{0,u_1-\epsilon}(f\topp n,f) = 0\label{eq:r1}\\
& & \lim_{\epsilon\downarrow0}\limsupn d_{u_1-\epsilon,u_1+\epsilon}(f\topp n,f) = 0\label{eq:r2}\\
& & \limn d_{u_1+\epsilon,1}(f\topp n,f) = 0\label{eq:r3}.
\eqnen
For more background on convergence in Skorohod metric, we refer to \citet[Chapter 4]{billingsley99convergence} and \citet[Chapter 4.4.1]{resnick87extreme}.

To show~\eqref{eq:r1}, remark that since $u\topp n\to u$, for $n$ large enough so that $u\topp n_1>u_1-\epsilon$, it follows that
\[
r_{0,u_1-\epsilon}(f\topp n) = r_{0,u_1-\epsilon}(m\topp n) \qmand r_{0,u_1-\epsilon}(f) = r_{0,u_1-\epsilon}(m).
\]
Therefore~\eqref{eq:r1} follows from~\eqref{eq:m}.

To show~\eqref{eq:r3}, observe that for $n$ large enough so that $u_1\topp n<u_1+\epsilon$,
\[
r_{u_1+\epsilon,1}(f\topp n) = \ccbb{j\topp n_1\vee m_s\topp n}_{s\in[u_1+\epsilon,1]}  \qmwith j\topp n_1 = x_{u_1\topp n}\topp n + y_1\topp n,
\]
which is the pointwise maximum function of a constant function and a continuous function. Note also that $r_{u_1+\epsilon,1}(f) = \{j_1\vee m_t\}_{t\in[u_1+\epsilon,1]}$. Observe that
\equh\label{eq:jump}
\limn j\topp n_1 = j_1.
\eque
Since for general continuous functions, $g\topp n\to g$ and $h\topp n\to h$ in $C[a,b]$ imply $g\topp n\vee h\topp n\to g\vee h$ in $C[a,b]$,~\eqref{eq:r3} follows.

At last to show~\eqref{eq:r2}, consider $n$ large enough so that $u_1\topp n\in(u_1-\epsilon,u_1+\epsilon)$. There are three cases to be discussed here. First assume $j_1>m_{u_1}$. This is the case that the shot noise at $u_1$ creates a discontinuity in the path by up-lifting to $j_1$. In this case, since $m_t$ is continuous in $t$, choose $\epsilon>0$ small enough so that $j_1>m_{u_1+\epsilon}$ and take $n$ large enough so that 
\[
j_1\topp n>m_{u_1+\epsilon}\topp n.
\]
This can be done because of~\eqref{eq:jump} and~\eqref{eq:m}. In this case, one can always pick $\lambda_n\in\Lambda_{u_1-\epsilon,u_1+\epsilon}$ such that $\lambda_n(u_1) = u_1\topp n$ and $\sup_{t\in[u_1-\epsilon, u_1+\epsilon]}|\lambda_n(t)-t| = |u_1\topp n-u_1|$. Thus, for all such choices of $\epsilon$, $n$ and $\lambda_n$,
\begin{multline*}
d_{u_1-\epsilon,u_1+\epsilon}(f\topp n,f) \leq |u_1\topp n - u_1| \vee \sup_{t\in[u_1-\epsilon,u_1+\epsilon]}|f_t - (f\topp n\circ\lambda_n)_t|\\
\leq |u_1\topp n-u_1|\vee\sup_{t_1,t_2\in[u_1-\epsilon,u_1)}|f_{t_1}-(f\topp n\circ\lambda_n)_{t_2}| \vee |j\topp n_1-j_1|.
\end{multline*}
The second term above equals $\sup_{t_1,t_2\in[u_1-\epsilon,u_1)}|m_{t_1}-(m\topp n\circ\lambda_n)_{t_2}|$. Now to show~\eqref{eq:r2} it remains to show that
\equh\label{eq:r2'}
\lim_{\epsilon\downarrow0}\limsupn\sup_{t_1,t_2\in[u_1-\epsilon,u_1)}|m_{t_1}-(m\topp n\circ\lambda_n)_{t_2}| = 0.
\eque
To see this, for any $\delta>0$ we take $\epsilon>0$ small enough so that $\sup_{t\in[u_1-\epsilon,u_1+\epsilon]}|m_t-m_{u_1}|\leq \delta/3$. At the same time, for $n$ large enough, $\sup_{n\in[0,1]}|m_t\topp n-m_t|\leq \delta/3$. Thus, for all $t_1,t_2\in[u_1-\epsilon,u_1)$,
\begin{multline*}
|m_{t_1}-(m\topp n\circ\lambda_n)_{t_2}|\\
\leq |m_{t_1}-m_{u_1}| + |(m\circ\lambda_n)_{t_2}-m_{u_1}|+|(m\topp n\circ\lambda_n)_{t_2}-(m\circ\lambda_n)_{t_2}|\leq\delta.
\end{multline*}
Therefore~\eqref{eq:r2'} follows.

We have thus completed the proof of~\eqref{eq:r2} in the case  $j_1>m_{u_1}$. The case $j_1<m_{u_1}$ is trivial since for $n$ large enough, $f\topp n = m\topp n\to m = f$ (this is the case that $j_1$ does not change the path $m$ at all). We only discuss the case $j_1=m_{u_1}$. In this case,~\eqref{eq:r1} still holds and we show
\[
\limn d_{u_1-\epsilon,1}(f\topp n,f) = 0.
\]
Since $f = m$ in this case, and~\eqref{eq:m} still holds, it suffices to show
\[
\limn d_{u_1-\epsilon,1}(f\topp n,m\topp n) = 0.
 \]
Indeed,  
\[
d_{u_1-\epsilon,1}(f\topp n,m\topp n)\leq \sup_{t\in[u_1-\epsilon,1]}|f\topp n_t-m\topp n_t| = \pp{j\topp n_1-m\topp n_{u_1\topp n}}\vee0,
\]
where the last step follows from~\eqref{eq:fnt}. 
The upper bound above goes to zero since $\limn j\topp n_1 = j_1 = m_{u_1} = \limn m_{u_1\topp n}\topp n$.

We have thus proved that $\Psi\topp 1$ is continuous. Now consider $\Psi\topp k$ for general $k\geq 2$. Recall that it is assumed that $\indn u$ are pairwise disjoint. In view of~\eqref{eq:bump}, for each $i$ such that $j_i\defe x_{u_i}+y_i>m_{u_i}$, and consider $v_i = 1\wedge \inf\{s\leq 1:m_s>j_i\}$. Then, $f$ can be seen as $m$ {\it lifted up to $j_i$} over interval $[u_i,v_i)$, and a jump is created at $u_i$. Let $I\subset\{1,\dots,k\}$ be the collection of all such $i$s. 
If $\{[u_i,v_i]\}_{i\in I}$ are all disjoint, then it suffices to divide the interval $[0,1]$ into $2|I|+1$ appropriate subintervals, and proceed as before. 

If there are overlaps among $\{[u_i,v_i]\}_{i\in I}$, then a new situation that is unseen in the case $k=1$ and needs to be dealt with is when for some $i,i'\in I$, $u_i<u_{i'}<v_i\leq v_{i'}$, and $j_i<j_{i'}$. This is the case that $m$ is lifted up at least twice over $[u_{i'},v_i)$. In this case, we can establish the convergence on intervals $[u_i-\epsilon,u_i+\epsilon], [u_i+\epsilon,u_{i'}-\epsilon], [u_{i'}-\epsilon,u_{i'}+\epsilon]$, for $\epsilon>0$ small enough, and others. The new types of intervals are $[u_i+\epsilon,u_{i'}-\epsilon]$ and $[u_{i'}-\epsilon,u_{i'}+\epsilon]$, although the convergence over them can be shown in a similar way as in the proof of~\eqref{eq:r3} and~\eqref{eq:r2}, respectively. The details are omitted.

At last, we remark that it is crucial here to restrict to the subset so that $\indn u$ are pairwise disjoint. Otherwise the jumps may be clustered and in this case, $J_1$-topology is too strong for establishing tightness. For an illustration of such an issue, see for example \citet{avram92weak} for a similar phenomena when establishing invariance principles for heavy-tailed processes.
\end{proof}

%%%%%%%%%%%%%%%%%%%%%%%%%%%%%
%%%%%%%%%%%%%%%%%%%%%%%%%%%%%
%%%%%%%%%%%%%%%%%%%%%%%%%%%%%
\section{Proof of Theorem~\ref{thm:1}, general perturbation}\label{sec:thm1negative}
In this section we prove the general case that $Y_i$ may take negative values. Recall that
\[
Z_{n,t} = \frac{M_{\floor{nt}}}{n^H} = \frac1{n^H}\max_{i=0,\dots,\floor{nt}}(S_i+Y_i).
\]
Introduce two modifications based on negative perturbations
\eqnh
Z_{n,t}^{-\infty} & = &  \frac1{n^H}\max_{i=0,\dots,\floor{nt}}(S_i+Y_i)\indd{Y_i\geq 0}\\
Z_{n,t}^0 & = &  \frac1{n^H}\max_{i=0,\dots,\floor{nt}}(S_i+Y_i\indd{Y_i\geq 0}).
\eqne
In this way, for all $n\in\N$,
\[
Z_{n,t}^{-\infty}\leq Z_{n,t}\leq Z_{n,t}^0, t\in[0,1].
\]
Intuitively, $Z_{n}^0$ corresponds to the maximum process of $\{S_n+Y_n\}_{n\in\N}$ with all negative perturbation $Y_i$ set to zero, and $Z_n^{-\infty}$ all negative perturbation $Y_i$ set to $-\infty$. 

Observe that we have proved that $Z_n^0\weakto Z^H$ in $D[0,1]$. This is because $\{Y_n\indd{Y_n\geq 0}\}_{n\in\N}$ are non-negative, i.i.d., and have the same upper tail distribution as $\indn Y$ in~\eqref{eq:RV}. Therefore, to complete the proof we show for all $\epsilon>0$,
\equh\label{eq:uniform}
\limn\proba\pp{\sup_{t\in[0,1]}Z_{n,t}^0-Z_{n,t}^{-\infty}>\epsilon} = 0.
\eque

To see this, we introduce some notation. Let $0=\tau_1<\tau_2<\cdots<\tau_K$ be the collection of all indices of non-negative $Y_i$s in increasing order, and set $\tau_{K+1} = n$. The key observation is the following:
\begin{multline*}
\ccbb{\sup_{t\in[0,1]}Z_{n,t}^0-Z_{n,t}^{-\infty}>\epsilon} \\
\subset \bcupp i1K\ccbb{\sup_{t\in\bb{\frac{\tau_i}n,\frac{\tau_{i+1}}n}}B_{n,t}^H - \pp{B_{n,\frac{\tau_i}n}^H+\frac{Y_{\tau_i}}{n^H}}>\epsilon}\\
\subset\bcupp i1K\ccbb{\sup_{t\in\bb{\frac{\tau_i}n,\frac{\tau_{i+1}}n}}B_{n,t}^H - B_{n,\frac{\tau_i}n}^H>\epsilon}.
\end{multline*}
Intuitively, if $Z_{n,t}^0-Z_{n,t}^{-\infty}>\epsilon$ for some $t\in(0,1)$, then it must be caused by certain large fluctuation of $B_n^H$. 

Now, let $\Delta_n$ be the size of the largest gap between $\tau_i$s, namely $\Delta_n = \max_{i=1,\dots,K}(\tau_{i+1}-\tau_i)$. We then have
\begin{multline*}
\ccbb{\sup_{t\in[0,1]}Z_{n,t}^0-Z_{n,t}^{-\infty}>\epsilon} \subset 
\ccbb{\sup_{\substack{s,t\in[0,1]\\|s-t|\leq\Delta_n/n}}B_{n,s}^H - B_{n,t}^H>\epsilon}\\
= \ccbb{\omega_{\Delta_n/n}(B_n^H)>\epsilon},
\end{multline*}
where  $\omega_\delta(x)$ is the uniform modulus of continuity defined as
\[
\omega_\delta(x) = \sup_{\substack{s,t\in[0,1]\\|s-t|\leq\delta}}|x_s-x_t|.
\]
We now make use of information of the path properties of $B_n^H$ implied by the invariance principle assumption~\eqref{eq:SnWIP}. By \citet[(12.9)]{billingsley99convergence}, 
\[
\omega_\delta(x) \leq 2\omega_\delta'(x) + J(x).
\]
Here,  $\omega'_\delta$ is the modulus of continuity commonly used for convergence in $D[0,1]$
\[
\omega_\delta'(x) = \inf\max_{i=1,\dots,v}\sup_{s,t\in[t_{i-1},t_i)}|x_s-x_t|,
\]
where the infimum is taken over all the sequences $0=t_0<t_1<\cdots<t_v=1$ such that $\min_{i=1,\dots,v}(t_i-t_{i-1})>\delta$, and $J$ is the maximal jump of process $x\in D[0,1]$ defined as
\[
J(x) = \sup_{0<t\leq 1}|x_t-x_{t^-}|.
\]
Therefore,
\equh\label{eq:uniform2}
\proba\pp{\sup_{t\in[0,1]}Z_{n,t}^0 - Z_{n,t}^{-\infty}>\epsilon} \leq \proba\pp{\omega_{\Delta_n/n}'(B_n^H)>\frac\epsilon4} + \proba\pp{J(B_n^H)>\frac\epsilon2}.
\eque
Since $B_n^H\weakto \B^H$ in $D[0,1]$ and $\B^H\in C[0,1]$ with probability one, it follows from \citet[Theorem 13.4]{billingsley99convergence}  that 
\equh\label{eq:J}
J(B_n^H)\weakto 0.
\eque 

Now to complete the proof we need a control on $\omega_{\Delta_n/n}'(B_n^H)$. 
On one hand,
observe that $\Delta_n-1$ is the length of the longest head run of $n$ independent coin tossings with head probability $p = \proba(Y_1<0)$ (see e.g.~\citep{gordon86extreme}), and in the current situation $p\in(0,1)$. Then, it is known that
\[
\ccbb{\Delta_n-1 - \log_{1/p}(n(1-p))}_{n\in\N}
\]
is tight \citep{arratia89two,gordon86extreme}. So for any $\delta>0$,
\[
\limsupn\proba\pp{\omega_{\Delta_n/n}'(B_n^H)>\epsilon/4}\leq\limsupn\proba\pp{\omega_\delta'(B_n^H)>\epsilon/4}.
\]
On the other hand, the invariance principle of $B_n^H$ implies, by \citep[Theorem 13.2]{billingsley99convergence}, for all $\epsilon>0$
\equh\label{eq:omega}
\lim_{\delta\downarrow0}\limsupn\proba\pp{\omega_\delta'(B_n^H)>\epsilon} = 0.
\eque
Therefore, combining~\eqref{eq:uniform2},~\eqref{eq:J} and~\eqref{eq:omega} yields the desired~\eqref{eq:uniform} and hence completes the proof.

%%%%%%%%%%%%%%%%%%%%%%%%%%%%%
%%%%%%%%%%%%%%%%%%%%%%%%%%%%%
%%%%%%%%%%%%%%%%%%%%%%%%%%%%%
\bibliographystyle{apalike}
\bibliography{references}

\end{document}